\newtheorem{theorem}{Theorem}[section]
\newtheorem{definition}[theorem]{Definition}
\newtheorem{lemma}[theorem]{Lemma}
\newtheorem{remark}[theorem]{Remark}
\newtheorem{prop}[theorem]{Proposition}
\numberwithin{equation}{section}
\begin{document}

\baselineskip 20pt

\begin{center}

\textbf{\Large Random Attractor For Stochastic Lattice
FitzHugh-Nagumo System Driven By $\alpha$-stable L\'evy Noises}
\footnote{This work has been partially supported by NSFC Grants
11071199, GXNSF Grants
2013GXNSFBA019008 and GXPDRP Grants 2013YB102.\\
$^*$Corresponding author: A. Gu~(mathgah@gmail.com).}

\vskip 0.5cm

{\large  Anhui Gu, Yangrong Li and Jia Li}

\vskip 0.3cm

\textit{School of Mathematics and Statistics, Southwest
University, Chongqing, 400715, China}\\

\vskip 1cm

\begin{minipage}[c]{15cm}

\noindent \textbf{Abstract}: The present paper is devoted to the
existence of a random attractor for stochastic lattice
FitzHugh-Nagumo system driven by $\alpha$-stable L\'evy noises under
some dissipative conditions. \vspace{5pt}

\vspace{5pt}

\textit{Keywords}:  Synchronization; L\'evy noise; Skorohod metric;
random attractor; c\`adl\`ag random dynamical system.

\end{minipage}
\end{center}

\vspace{10pt}

\baselineskip 18pt

\section{Introduction}
\noindent We consider the following stochastic lattice
FitzHugh-Nagumo system (SLFNS)
\begin{equation}\label{sys1}
\left\{
\begin{array}{l}
\frac{du_i}{dt_+}=u_{i-1}-2u_i+u_{i+1}-\lambda u_i+f_i(u_i)-v_i\\
\quad\quad+h_i+\sum_{j=1}^N\varepsilon_j u_i\diamond \frac{dL_t^{j}}{dt},\\
\frac{dv_i}{dt_+}=\varrho u_i-\varpi v_i
+g_i+\sum_{j=1}^N\varepsilon_j v_i\diamond \frac{dL_t^{j}}{dt},\\
 u(0)=u_{0}=(u_{i0})_{i\in \mathbb{Z}},
 v(0)=v_{0}=(v_{i0})_{i\in \mathbb{Z}}
\end{array}
\right.
\end{equation}
where $\mathbb{Z}$ denotes the integer set, $u_i\in \mathbb{R} $,
$\lambda, \varrho$ and $\varpi$ are positive constants, $h_i, \
g_i\in \mathbb{R}$, $f_i$ are smooth functions satisfying some
dissipative conditions, $\varepsilon_j\in \mathbb{R}$ for $j = 1,
...,N$, $L_t^j$ are mutually independent $\alpha$-stable L\'evy
motions ($1<\alpha<2$), and $\diamond$ denotes the Marcus sense in
the stochastic term, , $\frac{d\cdot}{dt_+}$ is right-hand
derivative of $\cdot(t)$ at $t$, $\ell^2=(\ell^2, ( \cdot , \cdot ),
\|\cdot\|)$ denotes the regular space of infinite sequences.

As we all known, noises involved in realistic systems will play an
important role as intrinsic phenomena rather than just compensation
of defects in deterministic models. Stochastic lattice dynamical
systems (SLDS) arise naturally in a wide variety of applications
where the spatial structure has a discrete character and random
influences or uncertainties are taken into account. For the recent
research of SLDS, we can see e.g. \cite{BLL, Huang, CL, ZZ1, HSZ}
for the first- or second-order lattice dynamical systems with white
noises in regular (or weight) space of infinite sequences, see e.g.
\cite{Gu1, Gu2} for the first-order lattice dynamical systems driven
by fractional Brownian motions, see \cite{Gu3} for the first-order
lattice dynamical systems with non-Gaussian noises.

When there are no noises terms, form similar to \eqref{sys1} is the
discrete of the FitzHugh-Nagumo system which arose as modeling the
signal transmission across axons in neurobiology (see \cite{Jones}).
Lattice FitzHugh-Nagumo system was used to stimulate the propagation
of action potentials in myelinated nerve axons (see \cite{EV}).
Gaussian processes like Brownian motion have been widely used to
model fluctuations in engineering and science. When lattice
FitzHugh-Nagumo system perturbed by additive or multiplicative white
noises, the existence of random attractors has been proved in
\cite{Huang, Gu4}. To the best of our knowledge, there are no
results on the system when it is perturbed by a non-Gaussian noise
(in terms of L\'evy noise).

In fact, some complex phenomena involve non-Gaussian fluctuations
with peculiar properties such as anomalous diffusion (mean square
displacement is a nonlinear power law of time) \cite{BG} and heavy
tail distribution (non-exponential relaxation) \cite{Yonezawa}. For
this topic, we can refer to \cite{SZF, SSB, Herrchen, Ditlevsen} for
more details. A L\'evy motion $L_t$ is a non-Gaussian process with
independent and stationary increments, i.e,. increments $\Delta
L_t=L_{t+\Delta t}-L_{t}$ are stationary and independent for any non
overlapping time lags $\Delta t$. Moreover, its sample paths are
only continuous in probability, namely,
$\mathbb{P}(|L_t-L_{t_0}|\geq \epsilon)\rightarrow 0$ as
$t\rightarrow t_0$ for any positive $\epsilon$. With a suitable
modification, these path may be taken as c\`adl\`ag, i.e., paths are
continuous on the right and have limits on the left. This continuity
is weaker than the usual continuity in time. Indeed, a c\`adl\`ag
function has at most countably many discontinuities on any time
interval, which generalizes the Brownian motion to some extent (see
e.g. \cite{Applebaum}). As a special case of L\'evy processes, the
symmetric $\alpha$-stable L\'evy motion plays an important role
among stable processes just like Brownian motion among Gaussian
processes. A stochastic process $\{L_t, t\ge 0\}$ is called the
$\alpha$-stable L\'evy motions if  (i) $L_0=0$ a.e., (ii) $L$ has
independent increments, and (iii) $L_t-L_s\sim
\mathbf{S}_{\alpha}((t-s)^{\frac{1}{\alpha}}, \beta, 0)$ for $0\le
s<t<\infty$ and for some $0<\alpha\le 2, -1\le \beta\le 1$, where
$\mathbf{S}_{\alpha}(\sigma, \beta, \nu)$ denotes the
$\alpha$-stable distribution with index of stability $\alpha$, scale
parameter $\sigma$, skewness parameter $\beta$ and shift parameter
$\nu$; in particular, $\mathbf{S}_{2}(\sigma, 0, \mu)=N(\mu,
2\sigma^2)$ denotes the Gaussian distribution. For more details on
$\alpha$-stable distributions, we can refer to \cite{Sato}. It is
worth mentioning that when $\alpha=2$, we have the standard Brownian
motion, which the Marcus sense stochastic terms (see e.g.
\cite{Marcus}) reduce to the Stratonovich stochastic terms and the
existence of a random attractor for system \eqref{sys1} has been
considered in \cite{Gu4}. For the further development on L\'evy
motions, we can refer to the recent monographs \cite{Applebaum, PZ}.

The goal of this article is to establish the existence of a random
attractor for SLFNS with the nonlinearity $f_i$ under some
dissipative conditions and driven by $\alpha$-stable L\'evy noises
with $\alpha\in (1, 2)$. By virtue of an Ornstein-Uhlenbeck process
with a stationary solution, we transform system \eqref{sys1} into a
conjugated random integral equation (with a solution in the sense of
Carath\'eodory). Here, we assume that $1<\alpha<2$ since this is the
only case where the solutions of the Ornstein-Uhlenbeck equations
for $\alpha$-stable L\'evy noises are stationary, which is vital to
our purpose. Fot the case of $0<\alpha<1$, there will be a new
challenges for us for future research.

The paper is organized as follows. In Sec. 2, we recall some basic
concepts in random dynamical systems. In Sec. 3, we give a unique
solution to system \eqref{sys1} and make sure that the solution
generates a random dynamical system. We establish the main result,
that is, the existence of a random attractor generated by system
\eqref{sys1} in Sec. 4.

\section{Random dynamical systems and random attractors}

For the reader's convenience, we introduce some basic concepts
related to random dynamical systems and random attractors, which are
taken from \cite{Arnold, Chueshov, HSZ}. Let $(\mathbb{E},
\|\cdot\|_{\mathbb{E}})$ be a separable Hilbert space and $(\Omega,
\mathcal{F}, \mathbb{P})$ be a probability space.

\begin{definition}
A stochastic process $\{\varphi(t, \omega)\}_{t\geq 0, \omega\in
\Omega}$ is a continuous random dynamical system (RDS) over
$(\Omega, \mathcal{F}, \mathbb{P},(\theta_{t})_{t\in \mathbb{R}})$
if $\varphi$ is $(\mathcal{B}[0,\infty)\times \mathcal{F}\times
\mathcal{B}(\mathbb{E}), \mathcal{B}(\mathbb{E}))$-measurable, and
for all $\omega \in \Omega$,

(i) the mapping $\varphi(t,\omega): \mathbb{E}\mapsto \mathbb{E}$,
$x\mapsto \varphi(t,\omega)x$ is continuous for every $t\geq 0$,

(ii)  $\varphi(0,\omega)$ is the identity on $\mathbb{E}$,

(iii) \mbox{(cocycle property)} \
$\varphi(s+t,\omega)=\varphi(t,\theta_{s}\omega)\varphi(s,\omega)$
for all $s, t\geq 0$.
\end{definition}

\begin{definition}  \label{tempered random set}
(i) A set-valued mapping $\omega\mapsto B(\omega)\subset \mathbb{E}$
(we may write it as $B(\omega)$ for short) is said to be a random
set if the mapping $\omega\mapsto$ dist$_{\mathbb{E}}(x, B(\omega))$
is measurable for any $x\in \mathbb{E}$, where dist$_{\mathbb{E}}(x,
D)$ is the distance in $\mathbb{E}$ between the element $x$ and the
set $D\subset \mathbb{E}$.

(ii) A random set $B(\omega)$ is said to be bounded if there exist
$x_0\in \mathbb{E}$ and a random variable $r(\omega)>0$ such that
$B(\omega)\subset\{x\in \mathbb{E}: \|x-x_0\|_{\mathbb{E}}\leq
r(\omega), x_0\in \mathbb{E}\}$ for all $\omega \in \Omega$.

(iii) A random set $B(\omega)$ is called a compact random set if
$B(\omega)$ is compact for all $\omega \in \Omega$.

(iv) A random bounded set $B(\omega) \subset \mathbb{E}$ is called
tempered with respect to $(\theta_{t})_{t\in \mathbb{R}}$ if for
a.e. $\omega \in \Omega$, \ $\lim_{t\rightarrow +\infty}e^{-\gamma
t}d(B(\theta_{-t}\omega))=0 \ \ \mbox{for all} \ \ \gamma > 0$,
where $d(B)=\sup_{x\in B}\|x\|_{\mathbb{E}}$. A random variable
$\omega \mapsto r(\omega)\in \mathbb{R}$ is said to be tempered with
respect to $(\theta_{t})_{t\in \mathbb{R}}$ if for a.e. $\omega \in
\Omega$, $\lim_{t\rightarrow +\infty} \sup_{t\in
\mathbb{R}}e^{-\gamma t}r(\theta_{-t}\omega)=0 \ \ \mbox{for all} \
\ \gamma > 0$.

\end{definition}

We consider an RDS $\{\varphi(t, \omega)\}_{t\geq 0, \omega\in
\Omega}$ over  $(\Omega, \mathcal{F}, \mathbb{P},(\theta_{t})_{t\in
\mathbb{R}})$ and $\mathcal{D}(\mathbb{E})$ the set of all tempered
random sets of $\mathbb{E}$.

\begin{definition}
A random set $\mathcal{K}$ is called an absorbing set in
$\mathcal{D}(\mathbb{E})$ if for all $B\in \mathcal{D}(\mathbb{E})$
and a.e. $\omega \in \Omega$ there exists $t_{B}(\omega)>0$ such
that
$$\varphi(t,\theta_{-t}\omega)B(\theta_{-t}\omega)\subset
\mathcal{K}(\omega) \ \ \mbox{for all} \ \ t\geq t_{B}(\omega).$$

\end{definition}

\begin{definition}
A random set $\mathcal{A}$ is called a global random
$\mathcal{D}(\mathbb{E})$ attractor (pullback
$\mathcal{D}(\mathbb{E})$ attractor) for $\{\varphi(t,
\omega)\}_{t\geq 0, \omega\in \Omega}$ if the following hold:

(i) $\mathcal{A}$ is a random compact set, i.e. $\omega\mapsto d(x,
\mathcal{A}(\omega))$ is measurable for every $x\in \mathbb{E}$ and
$\mathcal{A}(\omega)$ is compact for a.e. $\omega \in \Omega$;

(ii)  $\mathcal{A}$ is strictly invariant, i.e. for $\omega \in
\Omega$ and all $t\geq 0$,
$\varphi(t,\omega)\mathcal{A}(\omega)=\mathcal{A}(\theta_{t}\omega)$;

(iii)  $\mathcal{A}$ attracts all sets in $\mathcal{D}(\mathbb{E})$,
i.e. for all $B\in \mathcal{D}(\mathbb{E})$ and a.e. $\omega \in
\Omega$, we have
$$\lim_{t\rightarrow+\infty}d(\varphi(t,\theta_{-t}\omega)
B(\theta_{-t}\omega), \mathcal{A}(\omega))=0,$$ where
$d(X,Y)=\sup_{x\in X} \inf_{y\in Y}\|x-y\|_{\mathbb{E}}$ is the
Hausdorff semi-metric ($X\subseteq \mathbb{E}, Y\subseteq
\mathbb{E}$).\end{definition}

\begin{prop}(See \cite{HSZ} .) \label{condition} Suppose that

(a) there exists a random bounded absorbing set $K(\omega)\in
\mathcal{D}(\ell^2)$, $\omega\in \Omega$, such that for any
$B(\omega)\in \mathcal{D}(\ell^2)$ and all $\omega\in \Omega$, there
exists $T(\omega, B)>0$ yielding $\varphi(t, \theta_{-t}\omega,
B(\theta_{-t}\omega))\subset K(\omega)$ for all $t\geq T(\omega,
B)$;

(b) the RDS $\{\varphi(t, \omega)\}_{t\geq 0, \omega\in \Omega}$ is
random asymptotically null on $K(\omega)$, i.e., for any
$\epsilon>0$, there exist $T(\epsilon, \omega, K)>0$ and
$I_0(\epsilon, \omega, K)\in \mathbb{N}$ such that
\begin{equation}\label{asy null}
\begin{split}
\sup_{u\in K(\omega)}&\sum_{|i|>I_0(\epsilon, \omega, K(\omega))}|
\varphi_i(t, \theta_{-t}\omega, u(\theta_{-t}\omega))|^2\\ &\leq
\epsilon^2,   \quad\quad\quad\forall t\geq T(\epsilon, \omega,
K(\omega)).
\end{split}
\end{equation}

Then the RDS $\{\varphi(t, \omega, \cdot)\}_{t\geq 0, \omega\in
\Omega}$ possesses a unique global random $\mathcal{D}(\ell^2)$
attractor given by
\begin{eqnarray}\label{ran attr}
\mathcal{\tilde{A}}(\omega)=\bigcap_{\tau\geq T(\omega, K)}
\overline{\bigcup_{t\geq \tau} \varphi(t,\theta_{-t}\omega,
K(\theta_{-t}\omega))}.\end{eqnarray}

\end{prop}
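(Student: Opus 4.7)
The plan is to invoke the classical existence theorem for random attractors of continuous RDS: once a random compact attracting set is available, the $\omega$-limit formula on the right-hand side of the statement defines the unique global attractor. Hypothesis (a) already supplies a random bounded absorbing set $K(\omega) \in \mathcal{D}(\ell^2)$; what is still needed is compactness in $\ell^2$, and this is exactly what the tail estimate (b) is designed to provide. The proof therefore splits into two parts: first, upgrade (a)+(b) to asymptotic compactness of the family $\{\varphi(t,\theta_{-t}\omega)K(\theta_{-t}\omega)\}_{t \ge T(\omega,K)}$ in $\ell^2$; second, apply the standard $\omega$-limit construction.

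For the asymptotic compactness step, fix $\omega$ and take any sequence $y_n = \varphi(t_n, \theta_{-t_n}\omega, u_n)$ with $u_n \in K(\theta_{-t_n}\omega)$ and $t_n \to +\infty$. By (a), for $n$ large the elements $y_n$ lie in the $\ell^2$-bounded set $K(\omega)$, so each coordinate sequence $\{(y_n)_i\}_n$ is bounded; a diagonal extraction yields a subsequence (still denoted $y_n$) converging componentwise to some $y = (y_i)_{i \in \mathbb{Z}}$. For any $\epsilon>0$, applying (b) with parameter $\epsilon/2$ produces an integer $I_0$ and a threshold $T$ such that $\sum_{|i|>I_0}|(y_n)_i|^2 \le \epsilon^2/4$ whenever $t_n \ge T$; passing to the limit gives $\sum_{|i|>I_0}|y_i|^2 \le \epsilon^2/4$, so in particular $y \in \ell^2$. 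Componentwise convergence on the finite index set $\{|i|\le I_0\}$ yields $\sum_{|i|\le I_0}|(y_n)_i - y_i|^2 \to 0$. Combining the two estimates, $\limsup_n \|y_n - y\|^2 \le \epsilon^2$, and since $\epsilon>0$ is arbitrary, $y_n \to y$ strongly in $\ell^2$.

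With asymptotic compactness in hand, the $\omega$-limit set $\mathcal{\tilde{A}}(\omega)$ defined by the displayed formula is nonempty and compact, is strictly $\varphi$-invariant (by the cocycle property together with the continuity of $x \mapsto \varphi(t,\omega)x$ and the closedness built into the definition), and pullback-attracts every $B \in \mathcal{D}(\ell^2)$ (combine absorption of $B$ into $K$ from (a) with the asymptotic compactness just established). Measurability of $\omega \mapsto \mathcal{\tilde{A}}(\omega)$ follows from the measurability of $K$ and $\varphi$ via a standard measurable selection argument, as in \cite{Arnold, Chueshov}. Uniqueness is routine: two candidate $\mathcal{D}(\ell^2)$-attractors must mutually pullback-attract, and being compact and invariant they must coincide. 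The main obstacle is the asymptotic compactness step, specifically the delicate simultaneous use of the tail bound (b) and of the diagonal extraction needed to promote componentwise convergence to norm convergence in $\ell^2$; everything else is abstract RDS machinery that can be quoted from the references given in the paper.
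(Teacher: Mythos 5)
The paper does not prove this proposition at all: it is imported verbatim from \cite{HSZ} (see also the analogous criteria in \cite{BLL, ZZ1}), so there is no internal proof to compare against. Your argument is exactly the standard one used in those references --- absorption of $K$ into itself plus the uniform tail estimate (b) yields pullback asymptotic compactness by diagonal extraction and the splitting $\sum_{|i|\le I_0}+\sum_{|i|>I_0}$, after which the $\omega$-limit construction, invariance, attraction, measurability and uniqueness are abstract RDS machinery --- and it is correct as sketched.
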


\section{SLFNS driven by $\alpha$-stable L\'evy noises}

Let $(\Omega, \mathcal{F}, \mathbb{P})$ be a probability space,
where $\Omega=\mathcal{S}(\mathbb{R}, \ell^2)$ with Skorokhod metric
as the canonical sample space of c\`adl\`ag functions defined on
$\mathbb{R}$ and taking values in $\ell^2$,
$\mathcal{F}:=\mathcal{B}(\mathcal{S}(\mathbb{R}, \ell^2))$ the
associated Borel $\sigma$-field and $\mathbb{P}$ is the
corresponding (L\'evy) probability measure on $\mathcal{F}$ which is
given by the distribution of a two-sided L\'evy process with paths
in $\mathcal{S}(\mathbb{R}, \ell^2)$, i.e. $\omega(t)=L_t(\omega)$.
Let $\theta_{t}\omega(\cdot)=\omega(\cdot+t)-\omega(t),\  t\in
\mathbb{R},$ then the mapping $(t, \omega)\rightarrow
\theta_t\omega$ is continuous and measurable (see \cite{Arnold}),
and the (L\'evy) probability measure is $\theta$-invariant, i.e.
$\mathbb{P}(\theta_t^{-1}(\tilde{A}))=\mathbb{P}(\tilde{A})$ for all
$\tilde{A}\in \mathcal{F}$ (see \cite{Applebaum}).

For convenience, we now formulate system \eqref{sys1} as a
stochastic differential equation in $\ell^2\times\ell^2$. For
$u=(u_i)_{i\in \mathbb{Z}}\in \ell^2$, define $\mathbb{A},
\mathbb{B}, \mathbb{B}^*$ to be linear operators from $\ell^2$ to
$\ell^2$ as follows:
\begin{eqnarray*}
(\mathbb{A}u)_i&=&-u_{i-1}+2u_i-u_{i+1}, \nonumber\\
(\mathbb{B}u)_i&=&u_{i+1}-u_i, \ \ (\mathbb{B}^*u)_i=u_{i-1}-u_i,\ \
i\in \mathbb{Z}.
\end{eqnarray*}
It is easy to show that
$\mathbb{A}=\mathbb{B}\mathbb{B}^*=\mathbb{B}^*\mathbb{B}$, $(
\mathbb{B}^* u, u')=(u, \mathbb{B} u')$ for all $u, u'\in \ell^2$,
which implies that $(\mathbb{A}u, u)\geq 0$.

Let $f_i\in \mathcal{C}(\mathbb{R})$ satisfy the conditions that
$\sup_{i\in \mathbb{Z}}|f'(u)|$ is bounded for $u$ in bounded sets
and $f_i(x)x\geq 0$ for all $x\in \mathbb{R}$. Let $\tilde{f}$ be
the Nemytski operator associated with $f_i$, for $u=(u_i)_{i\in
\mathbb{Z}}\in \ell^2$, then $\tilde{f}(u)\in \ell^2$ and
$\tilde{f}$ is locally Lipschitz from $\ell^2$ to $\ell^2$ (see
\cite{BLL, CL}). In the sequel, when no confusion arises, we
identify $\tilde{f}$ with $f$.

Let $\mathbb{E}=\ell^2\times\ell^2$, for $\Psi=(u, v)\in
\mathbb{E}$, denote the norm
$\|\Psi\|^2:=\|\Psi\|^2_{\mathbb{E}}=\|u\|^2+\|v\|^2$. Then system
\eqref{sys1} can be interpreted as a system of integral equations in
$\mathbb{E}$ for $t\in \mathbb{R}$ and $\omega\in \Omega$,
\begin{equation}\label{sys2}
\left\{
\begin{array}{l}
u(t)=u(0)+\int_0^t(-\mathbb{A}u(s)-\lambda u(s)\\
\quad\quad+f(u(s))-v(s)
+h)ds\\
\quad\quad\quad+\sum_{j=1}^N\int_0^t\varepsilon_j u(s) \diamond
dL_t^{j},\\
v(t)=v(0)+\int_0^t(\varrho u(s)-\varpi v(s)+g)ds\\
\quad\quad+\sum_{j=1}^N\int_0^t\varepsilon_j v(s) \diamond dL_t^{j},
\end{array}
\right.
\end{equation}
where the stochastic integral is understood to be in the Marcus
sense.

To prove that this stochastic equation \eqref{sys2} generates a
random dynamical system, we will transform it into a random
differential equation in $\mathbb{E}$. Now, we introduce the
Ornstein-Uhlenbeck processes in $\ell^2$ on the metric dynamical
system $(\Omega, \mathcal{F}, \mathbb{P},(\theta_{t})_{t\in
\mathbb{R}})$ given by the random variable
\begin{equation}\label{OU-sol}
z(\theta_{t}\omega)=-\int^0_{-\infty} e^{s} \theta_{t}\omega(s) ds,
\ \ t\in \mathbb{R}, \omega\in \Omega.
\end{equation} The above integrals exist in the sense
of any path with a subexponential growth, and $z$ solves the
following Ornstein-Uhlenbeck equation
\begin{equation}\label{OU-equ}
dz+zdt=dL_t,  \ \ t\in \mathbb{R}.
\end{equation}
In fact, we have the following properties (see Lemma 3.1 in
\cite{Gu3}): (i) There exists a $\{\theta_{t}\}_{t\in
\mathbb{R}}$-invariant subset $\bar{\Omega}\in \mathcal{F}$ of full
measure for a.e. $\omega\in \bar{\Omega}$, the random variable
\begin{equation*}
z(\omega)=-\int^0_{-\infty} e^{s} \omega(s) ds,
\end{equation*}
is well defined and the unique stationary solutions of
\eqref{OU-equ} is given by \eqref{OU-sol}. Moreover, the mapping
$t\rightarrow z(\theta_t\omega)$ is c\`adl\`ag; (ii) For $ \omega
\in \bar{\Omega}$, the sample paths $\omega(t)$ of $L_t$ satisfy
\begin{equation*}
\lim_{t\rightarrow\pm \infty}\frac{\omega(t)}{t}=0, \ t\in
\mathbb{R}
\end{equation*}
and
\begin{equation*}
\lim_{t\rightarrow\pm\infty}\frac{|z(\theta_t\omega)|}{|t|}=
\lim_{t\rightarrow\pm\infty}\frac{1}{t}\int_0^t
z(\theta_t\omega(s))ds=0.
\end{equation*}

Now, let $z_j$ be the associated Ornstein-Uhlenbeck process
corresponding to \eqref{OU-equ} with $L_t^{j}$ instead of $L_t$ and
denote $\Lambda(\omega)=e^{\sum_{j=1}^N\varepsilon_j
z_j(\omega)}\mathbf{Id}_{\mathbb{E}}$, then $\Lambda(\omega)$ is
clearly a homeomorphism in $\mathbb{E}$ and the inverse operator is
well defined by $\Lambda^{-1}(\omega)=e^{-\sum_{j=1}^N\varepsilon_j
z_j(\omega)}\mathbf{Id}_{\mathbb{E}}$. It is easy to verify that
$\|\Lambda^{-1}(\theta_t\omega)\|$ has sub-exponential growth as
$t\rightarrow\pm \infty$ for $\omega\in \Omega$. Hence
$\|\Lambda^{-1}\|$ is tempered. Since the mapping of $\theta$ on
$\bar{\Omega}$ has the same properties as the original one if we
choose the trace $\sigma$-algebra with respect to $\bar{\Omega}$ to
be denoted also by $\mathcal{F}$, we can change our metric dynamical
system with respect to $\bar{\Omega}$, and still denoted the symbols
by $(\Omega, \mathcal{F}, \mathbb{P},(\theta_{t})_{t\in
\mathbb{R}})$.

Denote $\xi(\theta_t\omega)=\sum_{j=1}^N\varepsilon_j
z_j(\theta_t\omega)$, and consider the change in variables
\begin{equation*}
\begin{split}
(U(t), V(t))=&\Lambda^{-1}(\theta_t\omega)(u(t), v(t))\\
\quad\quad&=e^{-\xi(\theta_t\omega)}(u(t), v(t)),
\end{split}
\end{equation*}
where $(u, v)$ is the solution of \eqref{sys2}, then we get the
evolution equations with random coefficients but without white noise
\begin{equation}\label{sys3}
\left\{
\begin{array}{l}
\frac{dU}{dt_+}=-\mathbb{A}U-(\lambda-\xi(\theta_t\omega))U\\
\quad\quad+e^{-\xi(\theta_t\omega)}f(e^{\xi(\theta_t\omega)}U)
-V+e^{-\xi(\theta_t\omega)}h,\\
\frac{dV}{dt_+}=\varrho U-(\varpi-\xi(\theta_t\omega))V
+e^{-\xi(\theta_t\omega)}g\\
\end{array}
\right.
\end{equation}
and initial condition $(U(0), V(0))=(U_0, V_0)\in \mathbb{E}$.

Now, we have the following result:
\begin{theorem}\label{uniqueness}
Let $T>0$ and $\Psi_0=(U_0, V_0)\in \mathbb{E}$ be fixed, then the
following statements hold:

(i) For every $\omega\in \Omega$, system \eqref{sys3} has a unique
solution $\Psi(\cdot, \omega, \Psi_0)=(U(\cdot, \omega, U_0),
V(\cdot, \omega, V_0))\in \mathcal{C}([0, T), \mathbb{E})$ in the
sense of Carath\'eodory.

(ii) For each $\omega\in \Omega$, the mapping $\Psi_0\in
\mathbb{E}\mapsto \Psi(\cdot, \omega, \Psi_0)\in \mathcal{C}([0, T),
\mathbb{E})$ is continuous, which implies the solution $\Psi$ of
\eqref{sys3} continuously depends on the initial data $\Psi_0$.

(iii) Equation \eqref{sys3} generates a continuous RDS
$(\varphi(t))_{t\geq 0}$ over $(\Omega, \mathcal{F},
\mathbb{P},(\theta_{t})_{t\in \mathbb{R}})$, where $\varphi(t,
\omega, \Psi_0)=\Psi(t, \omega, \Psi_0)$ for $\Psi_0\in \mathbb{E}$,
$t\geq 0$ and for all $\omega\in \Omega$. Moreover, $\psi(t, \omega,
\Psi_0)=\Lambda(\theta_t\omega)\psi(t, \omega,
\Lambda^{-1}(\omega)\Psi_0)$ for $\Psi_0\in \mathbb{E}$, $t\geq 0$
and for all $\omega\in \Omega$, then $\psi$ is another RDS for which
the process $(\omega, t) \rightarrow (\psi(t, \omega, \Psi_0))$
solves \eqref{sys2} for any initial condition $\Psi_0\in
\mathbb{E}$.
\end{theorem}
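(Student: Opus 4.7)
The plan is to view \eqref{sys3} as a pathwise random evolution equation $\dot\Psi = F(t,\omega,\Psi)$ on $\mathbb{E} = \ell^2\times\ell^2$, where the c\`adl\`ag time-dependence enters only through $t\mapsto \xi(\theta_t\omega)$. Since $\mathbb{A}$ is a bounded linear operator on $\ell^2$, $f$ is locally Lipschitz from $\ell^2$ to $\ell^2$, and $\xi(\theta_\cdot\omega)$ is locally bounded, the field $F(t,\omega,\cdot)$ is locally Lipschitz in $\Psi$ uniformly on compact $t$-intervals. On each half-open interval $[\tau_k,\tau_{k+1})$ between successive jumps of $\xi(\theta_\cdot\omega)$, a Banach fixed-point argument (Picard iteration in $\mathcal{C}([\tau_k,\tau_k+\delta],\mathbb{E})$) gives a unique local Carath\'eodory solution; the right-hand-derivative convention $d/dt_+$ lets me concatenate across the jump times. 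For a global solution in $\mathcal{C}([0,T),\mathbb{E})$ I would take the $\mathbb{E}$-inner product of \eqref{sys3} with $\Psi = (U,V)$, use $(\mathbb{A}U,U)\geq 0$ and the dissipativity assumption on the $f_i$, and absorb the cross term $(U,V)$ and the forcing by Young's inequality. This yields a pathwise differential inequality
\[
\frac{1}{2}\frac{d}{dt_+}\|\Psi(t)\|^2 \leq c_1(\theta_t\omega)\|\Psi(t)\|^2 + c_2(\theta_t\omega)
\]
with locally bounded coefficients, and a Gronwall estimate rules out finite-time blow-up. Part (ii) then follows by writing the integral form of \eqref{sys3} for two solutions, subtracting, and applying the local Lipschitz estimate on the bounded set produced by (i) together with Gronwall's lemma.

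For (iii), joint measurability of $(t,\omega,\Psi_0)\mapsto \varphi(t,\omega,\Psi_0)$ comes from continuity in $(t,\Psi_0)$ outside the countable jump set of $\xi(\theta_\cdot\omega)$, combined with the $\omega$-measurability of the successive Picard iterates. The cocycle identity $\varphi(s+t,\omega) = \varphi(t,\theta_s\omega)\varphi(s,\omega)$ is a uniqueness argument: both curves solve \eqref{sys3} driven by $\xi(\theta_\cdot(\theta_s\omega))$ with initial value $\varphi(s,\omega)\Psi_0$ at time $0$, hence coincide.

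The delicate step, which I expect to be the main obstacle, is verifying that $\psi(t,\omega,\Psi_0) := \Lambda(\theta_t\omega)\,\varphi(t,\omega,\Lambda^{-1}(\omega)\Psi_0)$ solves the original Marcus equation \eqref{sys2}. Here the defining property of the Marcus "$\diamond$" integral is essential, namely that it obeys the ordinary (Stratonovich-type) chain rule. Applying this rule to the product $(u,v)(t) = e^{\xi(\theta_t\omega)}(U,V)(t)$ and using that each Ornstein--Uhlenbeck process satisfies $dz_j = -z_j\,dt + dL_t^j$, so that
\[
de^{\xi(\theta_t\omega)} = e^{\xi(\theta_t\omega)}\Bigl(-\xi(\theta_t\omega)\,dt + \sum_{j=1}^{N}\varepsilon_j\diamond dL_t^j\Bigr),
\]
the $\xi$-dependent drift terms in \eqref{sys3} exactly cancel the drift generated by differentiating the exponential factor, while the Marcus stochastic terms $\sum_j \varepsilon_j (u,v)\diamond dL_t^j$ are produced. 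This recovers \eqref{sys2} term by term, and uniqueness for the Marcus SDE identifies $\psi$ as the required RDS; the cocycle property for $\psi$ is inherited from that of $\varphi$ through this conjugacy together with $\Lambda(\theta_{s+t}\omega) = \Lambda(\theta_t(\theta_s\omega))$.
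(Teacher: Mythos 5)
Your proposal follows essentially the same route as the paper: local Lipschitz continuity of the transformed nonlinearity together with standard Picard/fixed-point theory for local existence, an energy estimate plus Gronwall's lemma to rule out finite-time blow-up and to obtain uniqueness and continuous dependence, and the Marcus chain rule to verify the conjugacy of $\varphi$ and $\psi$ with \eqref{sys2}. The only differences are cosmetic: the paper works with the weighted quantity $\|U\|^2+\tfrac{1}{\varrho}\|V\|^2$ so that the cross terms cancel exactly rather than being absorbed by Young's inequality, and it declares the chain-rule computation you sketch to be ``routine'' and omits it.
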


\begin{proof}
(i) Let $F(t,
U)=e^{-\xi(\theta_t\omega)}f(e^{\xi(\theta_t\omega)}U)$, for any
fixed $T>0$ and $\Psi_0\in \mathbb{E}$ and let $U_1, U_2\in Y$,
where $Y$ is a bounded set in $\mathbb{E}$, we have
\begin{equation*}
\begin{split}
\|F(t, &U_1)-F(t, U_2)\|\\
&=\|e^{-\xi(\theta_t\omega)} f(e^{\xi(\theta_t\omega)}U_1)-
e^{-\xi(\theta_t\omega)}f(e^{\xi(\theta_t\omega)}U_2)\|\\
&\quad\quad\quad\quad\leq C_Y\|v_1-v_2\|,
\end{split}
\end{equation*}
where $C_Y$ is a constant only depending on $Y$. This implies that
the mapping $F(t, U)$ is locally Lipschitz with respect to $U$ and
the Lipschitz constant is uniformly bounded in $[0, T]$. By the
standard arguments, we know that \eqref{sys3} possesses a local
solution $\Psi(\cdot, \omega, \Psi_0)\in \mathcal{C}([0, T_{\max}),
\mathbb{E})$, where $[0, T_{\max})$ is the maximal interval of
existence of the solution of \eqref{sys3}. Next, we need to show
that the local solution is a global one. By taking the inner
products of $U$ and $V$ respectively in $\ell^2$ with the two
equations in system \eqref{sys3}, we have
\begin{equation}\label{est1}
\begin{split}
\frac{d}{dt_+}(&\|U\|^2+\frac{1}{\varrho}\|V\|^2)\\
&\le
-(\delta-2\xi(\theta_t\omega))(\|U\|^2+\frac{1}{\varrho}\|V\|^2)\\
&\quad\quad+\frac{1}{\delta}(\|h\|^2+\frac{1}{\varrho}\|g\|^2)
e^{-2\xi(\theta_t\omega)},
\end{split}
\end{equation}
where $\delta=\min\{\lambda, \varpi\}$. By virtue of the special
Gronwall lemma (see Lemma 2.8 in \cite{Robinson}), it yields that
\begin{equation*}
\begin{split}
\|\Psi(t)&\|^2\leq e^{-\delta t+2\int_0^t\xi(\theta_s\omega)ds}
\|\Psi_0\|^2+c_1(\|h\|^2+\|g\|^2)\\
&\cdot e^{-\delta t+2\int_0^t\xi(\theta_s\omega)ds}\int_0^t
e^{-2\xi(\theta_s\omega)+\delta s
-2\int_0^s\xi(\theta_r\omega)dr}ds,
\end{split}
\end{equation*}
where $c_1=\frac{\max\{1, \frac{1}{\varrho}\}}{\delta\min\{1,
\frac{1}{\varrho}\}}$. Denote
$$a(\omega)=2\int_0^T|\xi(\theta_s\omega)|ds$$
and
\begin{equation*}
\begin{split}
b(\omega)&=c_1\max_{t\in [0, T]}\{(\|h\|^2+\|g\|^2)\\
&\cdot e^{-\delta t+2\int_0^t\xi(\theta_s\omega)ds}\int_0^t
e^{-2\xi(\theta_s\omega)+\delta s
-2\int_0^s\xi(\theta_r\omega)dr}ds\}.
\end{split}
\end{equation*}
Due to the properties of the Ornstein-Uhlenbeck process, we know
that $a(\omega), b(\omega)$ are well-defined. Then we have
\begin{equation*}
\|\Psi(t)\|^2\leq  \|\Psi_0\|^2e^{a(\omega)}+b(\omega),
\end{equation*}
which implies that the solution $\Psi$ is defined in any interval
$[0, T]$.

(ii) Let $\Phi_0=(\bar{U}_0, \bar{V}_0)$, $\Psi_0=(U_0, V_0)\in
\mathbb{E}$, and $\Phi(t):=(\bar{U}(t, \omega, \bar{U}_0),
\bar{V}(t, \omega, \bar{V}_0)), \Psi(t):=(U(t, \omega, U_0), V(t,
\omega, V_0))$ be two solutions of \eqref{sys3}. By denoting
$Z(t)=\Phi(t)-\Psi(t)$, we have
\begin{equation*}
\begin{split}
\frac{d}{dt_+}&\|Z(t)\|^2\\
& \leq 2e^{-\xi(\theta_t\omega)} \|f(e^{\xi(\theta_t\omega)}\Phi(t))
-f(e^{\xi(\theta_t\omega)}\Psi(t))\|\|Z\|\\
&\quad\quad+2\xi(\theta_t\omega)\|Z\|^2\\
&\quad\quad\quad\quad\leq2(L_{Y'}+\xi(\theta_t\omega))\|Z\|^2\leq
\kappa\|Z\|^2,
\end{split}
\end{equation*}
where $\kappa=2(L_{Y'}+\max_{t\in [0, T]}|\xi(\theta_t\omega)|)$ is
well-defined, and $L_{Y'}$ denotes the Lipschitz constant of $f$
corresponding to a bounded set $Y'\in \mathbb{E}$ where $\Phi$ and
$\Psi$ belong to. By the Gronwall lemma again, we obtain
$$\|Z(t)\|^2\leq e^{\kappa t}\|Z(0)\|^2,$$
and consequently
$$\sup_{t\in [0, T]}\|\Phi(t)-\Psi(t)\|^2\leq e^{\kappa T}\|\Phi_0-\Psi_0\|^2.$$
If $\Phi_0=\Psi_0$, then the above inequality indicates that the
uniqueness and continuous dependence on the initial data of the
solutions of \eqref{sys3}.

(iii) The continuity of $\varphi$ is due to (i) and (ii). The
measurability of $\psi$ follows from the properties of $\Lambda$.
Here, we only remain to prove the conjugacy between $\varphi$ and
$\psi$. The verification by chain rule is routine and thus be
omitted. The proof is complete.

\end{proof}

\section{Existence of a global random attractor}

In this section, we will prove the existence of a global random
attractor for system \eqref{sys1}. Since the random dynamical
systems $\varphi$ and $\psi$ are conjugated, we only have to
consider the RDS $\varphi$. Firstly, we have our main result

\begin{theorem} \label{existence}
The SLDS $\varphi$ generated by system \eqref{sys3} has a unique
global random attractor.
\end{theorem}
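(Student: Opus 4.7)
The plan is to verify the two hypotheses of Proposition~\ref{condition} for the RDS $\varphi$ in $\mathbb{E}=\ell^2\times\ell^2$, after which the attractor will be produced automatically by formula \eqref{ran attr}. The tools at hand are the differential inequality \eqref{est1} already derived in the course of proving Theorem~\ref{uniqueness}, together with the sub-exponential growth $|\xi(\theta_t\omega)|/|t|\to 0$ and the ergodic relation $\frac{1}{t}\int_0^t \xi(\theta_s\omega)\,ds\to 0$ as $|t|\to\infty$, both of which are supplied by the stationary Ornstein--Uhlenbeck process introduced in Section 3.

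For hypothesis (a), I would replace $\omega$ by $\theta_{-t}\omega$ in \eqref{est1} and apply the Gronwall lemma on $[0,t]$ to obtain
\begin{equation*}
\|\Psi(t,\theta_{-t}\omega,\Psi_0)\|^2\le e^{-\delta t+2\int_{-t}^{0}\xi(\theta_s\omega)ds}\|\Psi_0\|^2+c_1(\|h\|^2+\|g\|^2)\int_{-t}^{0} e^{\delta s-2\int_s^0\xi(\theta_r\omega)dr-2\xi(\theta_s\omega)}\,ds.
\end{equation*}
The two asymptotic properties of $\xi$ above guarantee that, for every $\gamma>0$, the weight multiplying $\|\Psi_0\|^2$ decays to $0$ as $t\to\infty$ whenever $\Psi_0$ lies in a tempered set, while the integral converges to a tempered random radius $R^2(\omega)$. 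Hence the closed ball $K(\omega)=\{\Psi\in\mathbb{E}:\|\Psi\|\le R(\omega)+1\}$ is a bounded random absorbing set belonging to $\mathcal{D}(\mathbb{E})$.

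The real work is to verify the asymptotic nullness \eqref{asy null}. I plan to introduce a smooth cut-off $\rho:\mathbb{R}^+\to[0,1]$ with $\rho\equiv 0$ on $[0,1]$ and $\rho\equiv 1$ on $[2,\infty)$ and test \eqref{sys3} componentwise with $\rho(|i|/I)U_i$ and $\rho(|i|/I)V_i/\varrho$. The nonlinearity drops out of the energy estimate thanks to $f_i(x)x\ge 0$, and the discrete Laplacian $\mathbb{A}U$ is handled by summation by parts through the factorisation $\mathbb{A}=\mathbb{B}\mathbb{B}^*$; the commutator of $\mathbb{B}^*$ with multiplication by $\rho(|i|/I)$ produces an error of order $O(1/I)\|\Psi(t)\|^2$, which is controlled uniformly in $t$ by Step~1. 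One arrives at
\begin{equation*}
\frac{d}{dt_+}\sum_{i\in\mathbb{Z}}\rho(|i|/I)\Bigl(|U_i|^2+\tfrac{1}{\varrho}|V_i|^2\Bigr)\le -(\delta-2\xi(\theta_t\omega))\sum_{i\in\mathbb{Z}}\rho(|i|/I)\Bigl(|U_i|^2+\tfrac{1}{\varrho}|V_i|^2\Bigr)+\frac{C}{I}\|\Psi(t)\|^2+\frac{e^{-2\xi(\theta_t\omega)}}{\delta}\sum_{|i|\ge I}\Bigl(|h_i|^2+\tfrac{1}{\varrho}|g_i|^2\Bigr).
\end{equation*}
Shifting $\omega$ by $\theta_{-t}\omega$ and applying Gronwall once more, and using $h,g\in\ell^2$ so that the last tail sum vanishes as $I\to\infty$, delivers times $T(\varepsilon,\omega,K)$ and indices $I_0(\varepsilon,\omega,K)$ for which \eqref{asy null} holds uniformly over $\Psi_0\in K(\theta_{-t}\omega)$.

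Both hypotheses of Proposition~\ref{condition} being met, the conclusion follows and the attractor is given by \eqref{ran attr}; the corresponding attractor for $\psi$ (and hence for the original \eqref{sys2}) is then obtained by applying the conjugacy $\Lambda(\omega)$ from Theorem~\ref{uniqueness}(iii). The main technical obstacle I anticipate is controlling the $O(1/I)$ remainder in the cut-off estimate uniformly on $[0,t]$ while keeping the random coefficients $\xi(\theta_s\omega)$ integrable over this interval; once this bookkeeping is in place, every remaining step has the same structure as the energy estimate already performed for Theorem~\ref{uniqueness}.
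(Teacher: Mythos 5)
Your proposal is correct and follows essentially the same route as the paper: Lemma \ref{Absorbing} establishes the tempered absorbing ball exactly via the shifted Gronwall estimate on \eqref{est1}, and Lemma \ref{Asymptotic Null} carries out the same cut-off argument (using $f_i(x)x\ge 0$, the factorisation $\mathbb{A}=\mathbb{B}\mathbb{B}^*$ with an $O(1/N)$ commutator error, and the $\ell^2$ tails of $h,g$) before invoking Proposition \ref{condition}. No substantive differences to report.
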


In order to prove Theorem \ref{existence}, we will use Proposition
\ref{condition}. We first need to prove there exists an absorbing
set for $\varphi$ in $\mathcal{D}(\mathbb{E})$. Next, we will show
the RDS $\varphi$ is random asymptotically null in the sense of
\eqref{asy null}.

\begin{lemma} \label{Absorbing}
There exists a closed random tempered set $\mathcal{K}(\omega)\in $
$\mathcal{D}(\mathbb{E})$ such that for all $B\in
\mathcal{D}(\mathbb{E})$ and a.e. $\omega \in \Omega$ there exists
$t_{B}(\omega)>0$ such that
$$\varphi(t,\theta_{-t}\omega)B(\theta_{-t}\omega)\subset
\mathcal{K}(\omega) \ \ \mbox{for all} \ \ t\geq t_{B}(\omega).$$
\end{lemma}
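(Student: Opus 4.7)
The plan is to derive a pullback bound from the energy estimate \eqref{est1} in the proof of Theorem \ref{uniqueness}. Setting $y(s) := \|U(s)\|^2 + \varrho^{-1}\|V(s)\|^2$, that inequality reads $y'(s) \le -(\delta - 2\xi(\theta_s\omega))y(s) + \delta^{-1}(\|h\|^2 + \varrho^{-1}\|g\|^2)e^{-2\xi(\theta_s\omega)}$. Applying the scalar Gronwall lemma on $[0,t]$, then replacing $\omega$ by $\theta_{-t}\omega$ and making the substitutions $r\mapsto r-t$ and $s\mapsto s-t$, I expect to obtain a pullback estimate of the form
\begin{equation*}
\|\varphi(t,\theta_{-t}\omega,\Psi_0)\|^2 \le c_0\|\Psi_0\|^2 e^{-\delta t + 2\int_{-t}^0 \xi(\theta_r\omega)\,dr} + c_1(\|h\|^2+\|g\|^2)\int_{-t}^0 e^{\delta s + 2\int_s^0 \xi(\theta_r\omega)\,dr - 2\xi(\theta_s\omega)}\,ds,
\end{equation*}
valid for every $\Psi_0 \in B(\theta_{-t}\omega)$, with constants $c_0, c_1$ depending only on $\varrho$ and $\delta$.

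Next I would pass to the limit $t \to \infty$. The Ornstein-Uhlenbeck sample-path properties recalled after \eqref{OU-equ} give $|\xi(\theta_\tau\omega)| = o(|\tau|)$ and $\int_0^\tau \xi(\theta_r\omega)\,dr = o(|\tau|)$ as $|\tau| \to \infty$. Consequently the first factor $e^{-\delta t + 2\int_{-t}^0 \xi(\theta_r\omega)\,dr}$ behaves like $e^{-\delta t + o(t)}$, which beats the tempered growth of $d(B(\theta_{-t}\omega))$ for every $B \in \mathcal{D}(\mathbb{E})$, so the first term vanishes. The integrand of the second term is pointwise bounded by $e^{\delta s + o(|s|)}$, so the improper integral
\begin{equation*}
R(\omega) := \int_{-\infty}^0 e^{\delta s + 2\int_s^0 \xi(\theta_r\omega)\,dr - 2\xi(\theta_s\omega)}\,ds
\end{equation*}
converges absolutely. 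I would then define
\begin{equation*}
r(\omega)^2 := 1 + 2c_1(\|h\|^2+\|g\|^2)\,R(\omega), \qquad \mathcal{K}(\omega) := \{\Psi \in \mathbb{E} : \|\Psi\| \le r(\omega)\},
\end{equation*}
which is manifestly closed, while its measurability follows from Fubini applied to the measurable map $(\tau,\omega)\mapsto \xi(\theta_\tau\omega)$.

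The hard part will be verifying that $\mathcal{K}(\omega)$ itself lies in $\mathcal{D}(\mathbb{E})$, i.e.\ that $r(\omega)$ is tempered with respect to $\{\theta_t\}_{t\in\mathbb{R}}$. Changing variable $s \mapsto s-t$ yields
\begin{equation*}
R(\theta_{-t}\omega) = \int_{-\infty}^{-t} e^{\delta(s+t) + 2\int_s^{-t}\xi(\theta_r\omega)\,dr - 2\xi(\theta_s\omega)}\,ds,
\end{equation*}
whose exponent is controlled by $\delta(s+t) + o(|s|) + o(t)$ thanks to the sub-exponential growth of $\xi$, so for every $\gamma > 0$ a dominated-convergence type bound gives $e^{-\gamma t}R(\theta_{-t}\omega) \to 0$ as $t\to\infty$. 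This is the $\alpha$-stable analogue of the temperedness argument carried out for the Brownian Ornstein-Uhlenbeck process in \cite{Gu3, Gu4}, and it transfers directly because only the sub-exponential growth of $\xi$ is needed, not its Gaussianity. Combining this temperedness with the two limiting arguments above yields $\varphi(t,\theta_{-t}\omega)B(\theta_{-t}\omega) \subset \mathcal{K}(\omega)$ for all $t \ge t_B(\omega)$, which is precisely the required absorption property.
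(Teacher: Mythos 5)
Your proposal follows essentially the same route as the paper: the same Gronwall estimate from \eqref{est1}, the same pullback substitution $\omega\mapsto\theta_{-t}\omega$ with the change of variables $s\mapsto s-t$, the same closed ball $\mathcal{K}(\omega)=\overline{B_{\mathbb{E}}(0,R(\omega))}$ built from the convergent improper integral, and the same change-of-variables argument for temperedness of $R(\theta_{-t}\omega)$. The argument is correct (your inner integral $\int_s^{-t}\xi(\theta_r\omega)\,dr$ in the temperedness step is in fact the accurate form of what the paper writes), so no further comment is needed.
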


\begin{proof}
Let us start with $\Psi(t)=\varphi(t, \omega, \Psi_0)$. Then by
\eqref{est1}, we have
\begin{equation*}
\begin{split}
\|\varphi(t)&\|^2\leq e^{-\delta t+2\int_0^t\xi(\theta_s\omega)ds}
\|\Psi_0\|^2+c_1(\|h\|^2+\|g\|^2)\\
&\cdot e^{-\delta t+2\int_0^t\xi(\theta_s\omega)ds}\int_0^t
e^{-2\xi(\theta_s\omega)+\delta s
-2\int_0^s\xi(\theta_r\omega)dr}ds.
\end{split}
\end{equation*}
Now, by replacing $\omega$ with $\theta_{-t}\omega$ and $\Psi_0$
with $e^{-\xi(\theta_{-t}\omega)}\Psi_0$, respectively, in the
expression $\varphi$, we obtain
\begin{equation}\label{est2}
\begin{split}
 &\|\varphi(t, \theta_{-t}\omega,
e^{-\xi(\theta_{-t}\omega)}\Psi_0)
\|^2\\
&\leq e^{-\delta t+2\int_0^t\xi(\theta_{s-t}\omega)ds}\|
e^{-\xi(\theta_{-t}\omega)}\Psi_0\|^2\\
&\quad\quad+c_1(\|h\|^2+\|g\|^2) e^{-\delta
t+2\int_0^t\xi(\theta_{s-t}\omega)ds}\\
&\quad\cdot\int_0^ te^{-2\xi(\theta_{s-t}\omega)+\delta s
-2\int_0^s\xi(\theta_{r-t}\omega)dr}ds\\
&\leq e^{-\delta t-2\xi(\theta_{-t}\omega)+2\int_0^t
\xi(\theta_{s-t}\omega)ds}\|\Psi_0\|^2\\
&\quad\quad+c_1(\|h\|^2+\|g\|^2)\\
&\quad\cdot\int_0^te^{-2\xi(\theta_{s-t}\omega)+\delta (s-t)
-2\int_s^t\xi(\theta_{r-t}\omega)dr}ds\\
&\leq e^{-\delta t-2\xi(\theta_{-t}\omega)+2\int_{-t}^0
\xi(\theta_{s}\omega)ds}\|\Psi_0\|^2\\
&\quad\quad+c_1(\|h\|^2+\|g\|^2)\\
&\quad \cdot\int_{-t}^0e^{-2\xi(\theta_{s}\omega)+\delta s
-2\int_s^0\xi(\theta_{r}\omega)dr}ds\\
&\leq e^{-\delta t-2\xi(\theta_{-t}\omega)+2\int_{-t}^0
\xi(\theta_{s}\omega)ds}\|\Psi_0\|^2\\
&\quad\quad+c_1(\|h\|^2+\|g\|^2)\\
&\quad\cdot\int_{-\infty}^0e^{-2\xi(\theta_{s}\omega)+\delta s
-2\int_s^0\xi(\theta_{r}\omega)dr}ds.
\end{split}
\end{equation}
By the properties of the Ornstein-Uhlenbeck process, we know that
$$\int_{-\infty}^0e^{-2\xi(\theta_{s}\omega)+\delta s
-2\int_s^0\xi(\theta_{r}\omega)dr}ds<+\infty.$$ Consider for any
$\Psi_0\in B(\theta_{-t}\omega)$, we have
\begin{eqnarray*}
&&\|\varphi(t, \theta_{-t}\omega,
e^{-\xi(\theta_{-t}\omega)}\Psi_0)\|^2\\
&\leq& e^{-\delta t-2\xi(\theta_{-t}\omega)+2\int_{-t}^0
\xi(\theta_{s}\omega)ds}
d(B(\theta_{-t}\omega))^2\nonumber\\
&&+c_1(\|h\|^2+\|g\|^2)
\int_{-\infty}^0e^{-2\xi(\theta_{s}\omega)+\delta s
-2\int_s^0\xi(\theta_{r}\omega)dr}ds.
\end{eqnarray*}
Note that
\begin{eqnarray*}
\lim_{t\rightarrow+\infty}e^{-\delta t-2
\xi(\theta_{-t}\omega)+2\int_{-t}^0
\xi(\theta_{s}\omega)ds}d(B(\theta_{-t}\omega))^2=0,
\end{eqnarray*}
and denote
\begin{eqnarray*}
\begin{split}
R^2(\omega)=&1+c_1(\|h\|^2+\|g\|^2)\\
&\quad\cdot\int_{-\infty}^0e^{-2\xi(\theta_{s}\omega)+\delta s
-2\int_s^0\xi(\theta_{r}\omega)dr}ds,
\end{split}
\end{eqnarray*}
we conclude that
\begin{eqnarray}\label{abs set}
\mathcal{K}(\omega)=\overline{B_{\mathbb{E}}(0, R(\omega))}
\end{eqnarray}
is an absorbing closed random set. It remains to show that
$\mathcal{K}(\omega)\in \mathcal{D}(\mathbb{E})$. Indeed, from
Definition \ref{tempered random set} (iv), for all $\gamma>0$, we
get
\begin{eqnarray*}
\begin{split}
e^{-\gamma t}&R^2(\theta_{-t}\omega)=e^{-\gamma t}+c_1e^{-\gamma t}
(\|h\|^2+\|g\|^2)\\
&\cdot\int_{-\infty}^0e^{-2\xi(\theta_{s-t}\omega)+\delta s
-2\int_s^0\xi(\theta_{r-t}\omega)dr}ds\\
&\quad\quad\quad=e^{-\gamma t}+c_1e^{-\gamma t}(\|h\|^2+\|g\|^2)\\
&\cdot\int_{-\infty}^{-t}e^{-2\xi(\theta_{s}\omega)+\delta (s+t)
-2\int_s^0\xi(\theta_{r}\omega)dr}ds\rightarrow 0\\
&\quad\quad\quad\mbox{as} \ \ t\rightarrow\infty,
\end{split}
\end{eqnarray*}
which completes the proof.
\end{proof}

\begin{lemma} \label{Asymptotic Null}
Let $\Psi_0(\omega)\in \mathcal{K}(\omega)$ be the absorbing set
given by \eqref{abs set}. Then for every $\epsilon>0$, there exist
$\tilde{T}(\epsilon, \omega, \mathcal{K}(\omega))>0$ and
$\tilde{N}(\epsilon, \omega, \mathcal{K}(\omega))>0$, such that the
solution $\varphi$ of problem \eqref{sys3} is random asymptotically
null, that is, for all $t\geq \tilde{T}(\epsilon, \omega,
\mathcal{K}(\omega))$,
\begin{eqnarray*}
\begin{split}
\sup_{\Psi\in \mathcal{K}(\omega)}\sum_{|i|>\tilde{N} (\epsilon,
\omega, \mathcal{K}(\omega))}|& \varphi_i(t, \theta_{-t}\omega,
\Psi(\theta_{-t}\omega)|^2\leq \epsilon^2.
\end{split}
\end{eqnarray*}
\end{lemma}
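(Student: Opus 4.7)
The plan is to carry out the standard ``tail estimate'' for lattice systems, adapted to the transformed system \eqref{sys3}. I fix a smooth cut--off $\rho:\RR_+\to [0,1]$ with $\rho(s)=0$ for $s\le 1$, $\rho(s)=1$ for $s\ge 2$, and $|\rho'(s)|\le C_0$. For an integer $M\ge 1$ to be chosen later I set $\rho_{i,M}=\rho(|i|/M)$ and work with the weighted energy
\begin{equation*}
W_M(t,\omega)=\sum_{i\in\ZZ}\rho_{i,M}\Bigl(U_i(t)^2+\tfrac{1}{\varrho}V_i(t)^2\Bigr),
\end{equation*}
where $(U,V)=\varphi(t,\theta_{-t}\omega,\Lambda^{-1}(\theta_{-t}\omega)\Psi_0)$ solves \eqref{sys3}. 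The goal is to derive a differential inequality for $W_M$ that, upon integration, produces a bound of order $\epsilon^2$ once $M$ and $t$ are sufficiently large.

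First I multiply the first equation of \eqref{sys3} by $\rho_{i,M}U_i$, the second by $\tfrac{1}{\varrho}\rho_{i,M}V_i$, and sum over $i\in\ZZ$. The linear dissipative parts give $-(\delta-2\xi(\theta_t\omega))W_M$ with $\delta=\min\{\lambda,\varpi\}$, exactly as in \eqref{est1}. The cross term $-\sum_i\rho_{i,M}V_iU_i+\sum_i\rho_{i,M}U_iV_i$ cancels after rescaling by $\tfrac{1}{\varrho}$ in the usual way. The nonlinear contribution $e^{-\xi(\theta_t\omega)}\sum_i\rho_{i,M}U_if_i(e^{\xi(\theta_t\omega)}U_i)=e^{-2\xi(\theta_t\omega)}\sum_i\rho_{i,M}(e^{\xi(\theta_t\omega)}U_i)f_i(e^{\xi(\theta_t\omega)}U_i)\ge 0$ by the sign assumption $f_i(x)x\ge 0$, so it can be dropped to the good side. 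The forcing term is controlled by Young's inequality by $\tfrac{1}{\delta}e^{-2\xi(\theta_t\omega)}\sum_{|i|\ge M}\bigl(h_i^2+\tfrac{1}{\varrho}g_i^2\bigr)$ up to terms absorbed in $W_M$.

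The main technical point is the discrete Laplacian term $\sum_i\rho_{i,M}(\mathbb{A}U)_iU_i$. Writing $(\mathbb{A}U)_iU_i=U_i(U_i-U_{i-1})+U_i(U_i-U_{i+1})$ and applying summation by parts, the purely quadratic contributions with $\rho_{i,M}$ are non--negative and may be discarded, while the remaining ``commutator'' terms involve differences $\rho_{i+1,M}-\rho_{i,M}$, which by the mean value theorem are bounded by $C_0/M$. A Cauchy--Schwarz estimate then gives
\begin{equation*}
\Bigl|\sum_i\rho_{i,M}(\mathbb{A}U)_iU_i\Bigr|\ge -\frac{C}{M}\|U\|^2
\end{equation*}
for a universal constant $C$. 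Combining everything yields
\begin{equation*}
\frac{dW_M}{dt_+}\le -(\delta-2\xi(\theta_t\omega))W_M+\frac{C}{M}\|\Psi(t)\|^2+\frac{1}{\delta}e^{-2\xi(\theta_t\omega)}\!\!\sum_{|i|\ge M}\!\!\bigl(h_i^2+\tfrac{1}{\varrho}g_i^2\bigr).
\end{equation*}

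With this inequality in hand the conclusion is reached by Gronwall, in the same manner as in the proof of Lemma~\ref{Absorbing}: I replace $\omega$ by $\theta_{-t}\omega$ and $\Psi_0$ by $\Lambda^{-1}(\theta_{-t}\omega)\Psi_0$, and pass from $[0,t]$ to $[-t,0]$. Using the a priori bound $\|\varphi(t,\theta_{-t}\omega,\Lambda^{-1}(\theta_{-t}\omega)\Psi_0)\|^2\le R^2(\omega)$ from \eqref{abs set} on the $\tfrac{C}{M}\|\Psi\|^2$ term, together with the convergent integral $\int_{-\infty}^0 e^{-2\xi(\theta_s\omega)+\delta s-2\int_s^0\xi(\theta_r\omega)dr}ds<\infty$ from the Ornstein--Uhlenbeck properties, one arrives at
\begin{equation*}
\sup_{\Psi_0\in\mathcal{K}(\theta_{-t}\omega)}\!\!\sum_{|i|\ge 2M}\!\!|\varphi_i(t,\theta_{-t}\omega,\Psi_0)|^2\le e^{-\delta t}e^{A(\omega)}R^2(\omega)+\frac{C\,R^2(\omega)}{M}B(\omega)+\eta_M(\omega),
\end{equation*}
where $A,B$ are tempered random variables and $\eta_M(\omega)\to 0$ as $M\to\infty$ because $h,g\in\ell^2$. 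Choosing first $M=\tilde N(\epsilon,\omega,\mathcal{K}(\omega))$ large enough to render the last two terms below $\epsilon^2/2$, and then $\tilde T(\epsilon,\omega,\mathcal{K}(\omega))$ so that the first term is below $\epsilon^2/2$ for $t\ge\tilde T$, completes the argument. The main obstacle is the commutator estimate for the discrete Laplacian against the cut--off $\rho_{i,M}$: the computation must produce an $O(1/M)$ factor uniformly in $i$, otherwise one cannot offset it against the absorbing bound $R^2(\omega)$ which itself is not small.
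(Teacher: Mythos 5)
Your proposal is correct and follows essentially the same route as the paper's own proof: the same cut--off function, the same $O(1/M)$ commutator estimate for the discrete Laplacian against the cut--off, the same use of the sign condition $f_i(x)x\ge 0$ and of the $\ell^2$-tails of $h,g$, and the same Gronwall/pullback argument splitting the resulting bound into three pieces each made smaller than a fraction of $\epsilon^2$ by choosing $M$ and $t$ large. (Note the displayed commutator bound should read $\sum_i\rho_{i,M}(\mathbb{A}U)_iU_i\ge -\tfrac{C}{M}\|U\|^2$ without the absolute value, as your subsequent differential inequality correctly assumes.)
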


\begin{proof}
Choose a smooth cut-off function satisfying $0\leq \rho(s)\leq 1$
for $s\in \mathbb{R^{+}}$ and $\rho(s)=0$ for $0\leq s\leq 1$,
$\rho(s)=1$ for $s\geq 2$. Suppose there exists a positive constant
$c_0$ such that $|\rho'(s)|\leq c_0$ for $s\in \mathbb{R}^+$.

Let $N$ be a fixed integer which will be specified later, set
$x=(\rho(\frac{|i|}{N})U_{i})_{i \in \mathbb{Z}}$ and
$y=(\rho(\frac{|i|}{N})V_{i})_{i \in \mathbb{Z}}$. Then take the
inner product of the two equations in system \eqref{sys3} with $x$
and $y$ in $\ell^2$, respectively, and combine the following two
inequalities
\begin{eqnarray*}
(AU, x)=(\tilde{B}U, \tilde{B}x)\geq -\frac{2c_0}{N}\|U\|^2 \geq
-\frac{2c_0}{N}\|\varphi\|^2,
\end{eqnarray*}
and
\begin{eqnarray*}
-\infty<-2e^{-\xi(\theta_{t}\omega)} \sum_{i\in
\mathbb{Z}}\rho(\frac{|i|}{N})f_i(e^{\xi (\theta_{t}\omega)}U_i)
U_i\leq 0,
\end{eqnarray*}
we have
\begin{eqnarray*}
&&\frac{d}{dt_+}\sum_{i\in \mathbb{Z}}\rho(\frac{|i|}{N})|
\varphi_{i}|^2+(\delta-2\xi(\theta_{t}\omega))
\sum_{i\in \mathbb{Z}}\rho(\frac{|i|}{N})|\varphi_{i}|^2\\
&&\quad\quad\leq \frac{c_2}{N}\|\varphi(t, \omega,
e^{-\xi(\omega)}\Psi_0)\|^2\\
&&\quad\quad\quad\quad+c_1e^{-\xi(\theta_{t}\omega)} \sum_{|i|\geq
N}(|h_i|^2+|g_i|^2),
\end{eqnarray*}
where $c_2=\frac{4c_0}{\min\{1, \frac{1}{\varrho}\}}$. By using the
Gronwall lemma, for $t\geq T_{\mathcal{K}}
=T_{\mathcal{K}}(\omega)$, it follows that
\begin{eqnarray}
&&\sum_{i\in \mathbb{Z}}\rho(\frac{|i|}{N})|\varphi_{i}
(t, \omega, e^{-\xi(\omega)}\Psi_0(\omega))|^2\nonumber\\
&\leq & e^{-\delta(t-T_{\mathcal{K}})+2\int_{T_{\mathcal{K}}}^t
\xi(\theta_{s}\omega)ds}\nonumber\\
&&\quad\quad\cdot\sum_{i\in \mathbb{Z}} \rho(\frac{|i|}{N})
|\varphi_{i}(t, \omega, e^{-\xi(\omega)}
\Psi_0(\omega))|^2\label{est3}\\
&&+\frac{c_2}{N}\int_{T_{\mathcal{K}}}^t
e^{-\delta(t-\tau)+2\int_{\tau}^t\xi(\theta_{s}\omega)ds}\nonumber\\
&&\quad\quad\quad\quad\cdot\|\varphi(\tau, \omega, e^{-\xi(\omega)}
\Psi_0(\omega))\|^2d\tau\label{est4}\\
&&+c_1\sum_{|i|\geq N}(|h_i|^2+|g_i|^2)\nonumber\\
&&\quad\quad\cdot\int_{T_{\mathcal{K}}}^t
e^{-\delta(t-\tau)+2\int_{\tau}^t
\xi(\theta_{s}\omega)ds-\xi(\theta_{t}\omega)}d\tau.\label{est5}
\end{eqnarray}
Now, substitute $\theta_{-t}\omega$ for $\omega$ and estimate each
term from \eqref{est3} to \eqref{est5}. In \eqref{est2}, with $t$
replaced with $T_{\mathcal{K}}$ and $\omega$ with
$\theta_{-t}\omega$, respectively, it follows from \eqref{est3} that
\begin{eqnarray*}
&&e^{-\delta(t-T_{\mathcal{K}})+2\int_{T_{\mathcal{K}}}^t
\xi(\theta_{s-t}\omega)ds}\nonumber\\
&&\quad\cdot\sum_{i\in \mathbb{Z}}\rho(\frac{|i|}{N})
|\varphi_{i}(T_{\mathcal{K}}, \theta_{-t}\omega,
e^{-\xi(\theta_{-t}\omega)}
\Psi_0(\theta_{-t}\omega))|^2\nonumber\\
&\leq & e^{-\delta t-2\xi(\theta_{-t}\omega) +2\int_{0}^t
\xi(\theta_{s}\omega)ds}\|\Psi_0\|^2\nonumber\\
&&~~+c_1
\int_{0}^{T_{\mathcal{K}}}e^{-2\xi(\theta_{s-t}\omega)+\delta (s-t)
+2\int_s^t\xi(\theta_{r-t}\omega)dr}ds\nonumber\\
&\leq &e^{-\delta t-2\xi(\theta_{-t}\omega) +2\int_{0}^t
\xi(\theta_{s}\omega)ds}\|\Psi_0\|^2\nonumber\\
&&~~+c_1 \int_{-t}^{T_{\mathcal{K}}-t}e^{-2
\xi(\theta_{s}\omega)+\delta s -2\int_s^0\xi(\theta_{r}\omega)dr}ds.
\end{eqnarray*}
Due to the properties of the Ornstein-Uhlenbeck process, there
exists a $T_1(\epsilon, \omega,
\mathcal{K}(\omega))>T_{\mathcal{K}}(\omega)$, such that if
$t>T_1(\epsilon, \omega, \mathcal{K}(\omega))$, then
\begin{eqnarray}
&&e^{-\delta(t-T_{\mathcal{K}})+2\int_{T_{\mathcal{K}}}^t
\xi(\theta_{s-t}\omega)ds}\nonumber\\
&&\quad\cdot\sum_{i\in \mathbb{Z}}\rho(\frac{|i|}{N})
|\varphi_{i}(T_{\mathcal{K}}, \theta_{-t}\omega,
e^{-\xi(\theta_{-t}\omega)}
\Psi_0(\theta_{-t}\omega))|^2\nonumber\\
&&\quad\quad\quad\quad \leq \frac{\epsilon^2}{3}. \label{res1}
\end{eqnarray}
Next, from \eqref{est2} and \eqref{est4}, it follows that
\begin{eqnarray*}
&&\frac{c_2}{N}\int_{T_{\mathcal{K}}}^te^{-\delta(t-\tau)
+2\int_{\tau}^t\xi(\theta_{s-t}\omega)ds}\nonumber\\
&&\quad\quad\cdot \|\varphi(\tau, \theta_{-t}\omega, e^{
-\xi(\theta_{-t}\omega)}\Psi_0(\theta_{-t}\omega))\|^2d\tau\nonumber\\
&\leq & \frac{c_2}{N}\|\Psi_0\|^2(t-T_{\mathcal{K}}) e^{-\delta
t-2\xi(\theta_{-t}\omega)+2\int_{0}^t
\xi(\theta_{s-t}\omega)ds}\nonumber\\
&&+\frac{c_1c_2}{N}(\|h\|^2+\|g\|^2)\nonumber\\
&&\quad\cdot\int_{T_{\mathcal{K}}}^t\int_0^{\tau}e^{-2
\xi(\theta_{s-t}\omega)+\delta (s-t)
+2\int_s^t\xi(\theta_{r-t}\omega)dr}dsd\tau\nonumber\\
&\leq & \frac{c_2}{N}\|\Psi_0\|^2(t-T_{\mathcal{K}}) e^{-\delta
t-2\xi(\theta_{-t}\omega)+2\int_{0}^t
\xi(\theta_{s-t}\omega)ds}\nonumber\\
&&+\frac{c_1c_2}{N}(\|h\|^2+\|g\|^2)\nonumber\\
&&\quad\cdot\int_{T_{\mathcal{K}}}^t\int_{-t}^{\tau-t}
e^{-2\xi(\theta_{s}\omega)+\delta s
-2\int_s^0\xi(\theta_{r}\omega)dr}dsd\tau.
\end{eqnarray*}
Thanks to the properties of the Ornstein-Uhlenbeck process, there
exist $T_2(\epsilon, \omega,
\mathcal{K}(\omega))>T_{\mathcal{K}}(\omega)$ and $N_1(\epsilon,
\omega, \mathcal{K}(\omega))>0$ such that if $t>T_2(\epsilon,
\omega, \mathcal{K}(\omega))$ and $N>N_1(\epsilon, \omega,
\mathcal{K}(\omega))$, then
\begin{eqnarray}
&&\frac{c_2}{N}\int_{T_{\mathcal{K}}}^t
e^{-\delta(t-\tau)+2\int_{\tau}^t\xi(\theta_{s-t}\omega)ds}\nonumber\\
&&\cdot\|\varphi(\tau, \theta_{-t}\omega, e^{-\xi
(\theta_{-t}\omega)}\Psi_0(\theta_{-t}\omega))\|^2d\tau \leq
\frac{\epsilon^2}{3}. \label{res2}
\end{eqnarray}
Since $h, g\in \ell^2$, by the properties of the Ornstein-Uhlenbeck
process again, we find that there exists $N_2(\epsilon, \omega,
\mathcal{K}(\omega))>0$ such that if $N>N_2(\epsilon, \omega,
\mathcal{K}(\omega))$, then from \eqref{est5},
\begin{eqnarray}
&&c_1\sum_{|i|\geq
N}(|h_i|^2+|g_i|^2)\nonumber\\
&&\quad\cdot\int_{T_{\mathcal{K}}}^t
e^{-\delta(t-\tau)+2\int_{\tau}^t\xi(\theta_{s}\omega)ds
-\xi(\theta_{t}\omega)}d\tau \leq  \frac{\epsilon^2}{3}.
\label{res3}
\end{eqnarray}
Let
\begin{eqnarray*}
&&\tilde{T}(\epsilon, \omega, \mathcal{K}(\omega))=
\max\{T_1(\epsilon, \omega, \mathcal{K}(\omega)),
T_2(\epsilon, \omega, \mathcal{K}(\omega))\},\\
&&\tilde{N}(\epsilon, \omega, \mathcal{K}(\omega))=
\max\{N_1(\epsilon, \omega, \mathcal{K}(\omega)), N_2(\epsilon,
\omega, \mathcal{K}(\omega))\}.
\end{eqnarray*}
Then from \eqref{res1}, \eqref{res2} and \eqref{res3}, for
$t>\tilde{T}(\epsilon, \omega, \mathcal{K}(\omega))$ and
$N>\tilde{N}(\epsilon, \omega, \mathcal{K}(\omega))$, we get
\begin{eqnarray*}
&&\sum_{|i|\geq 2N}|\varphi_{i}(t, \theta_{-t}\omega,
e^{-\xi(\theta_{-t}\omega)}\Psi_0(\theta_{-t}\omega))|^2\\
&\leq & \sum_{i\in \mathbb{Z}}\rho(\frac{|i|}{N})| \varphi_{i}(t,
\theta_{-t}\omega,
e^{-\xi(\theta_{-t}\omega)}\Psi_0(\theta_{-t}\omega))|^2 \leq
\epsilon^2,
\end{eqnarray*}
which implies the conclusion.
\end{proof}

We are now in a position to prove our main result.

\textit{Proof of Theorem \ref{existence}.} The desired result
follows directly from Lemmas \ref{Absorbing} and \ref{Asymptotic
Null} and Proposition \ref{condition}. $\blacksquare$

\begin{remark} The result may have generalized the existing results
(see e.g. \cite{Huang, Gu4}) to some extent. First, c\`adl\`ag
functions in a more wider sense than continues ones as indicated in
Introduction section; Second, here we restrict to $1<\alpha<2$, when
$\alpha=2$, the $\alpha$-stable process actually reduces to the
standard Brownian motion.
\end{remark}

\begin{remark} Recently,
some sufficient conditions for the upper-semicontinuity of
attractors for random lattice systems perturbed by small white
noises have been given in \cite{Zhou}. Here, it is worth mentioning
that all the results on this topic are focus on the SLDS perturbed
by the white noises. It will be an interesting question left to
future research.
\end{remark}

\end{document}